\documentclass{amsart}

\usepackage{amssymb}
\usepackage{amsfonts}
\usepackage{amsthm}

\theoremstyle{plain}
\newtheorem{theorem}{Theorem}[section]
\newtheorem{corollary}[theorem]{Corollary}
\newtheorem{proposition}[theorem]{Proposition}

\theoremstyle{definition}

\newtheorem{remark}[theorem]{Remark}
\newtheorem{example}[theorem]{Example}

\numberwithin{equation}{section}

\newcommand{\R}{{\mathbb R}}

\begin{document}

\title[Boundedness of Positive Integral Operators on Lorentz-Gamma Spaces]%
{Boundedness of Positive Integral Operators on Lorentz-Gamma Spaces}

\author[R.~Kerman]{Ron Kerman}
\address{Department of Mathematics and Statistics,
Brock University, 1812 Sir Isaac Brock Way, St.~Catharines, ON L2S~3A1, Canada}
\email{rkerman@brocku.ca}

\author[S.~Spektor]{S. Spektor}
\address{Quantitative Science Department,
Canisius University, 2001 Main Street, Buffalo, NY 14208-1098, USA}
\email{spektors@canisius.edu}

\begin{abstract}
We characterize the boundedness of a positive integral operator $T_K$,
with kernel $K\in M_+(\R^{2n})$, between Lorentz-Gamma spaces
$\Gamma_{p,\phi_2}(\R^n)$ and $\Gamma_{q,\phi_1}(\R^n)$, $1<p\le q<\infty$.
The key step reduces the $n$-dimensional problem to a one-dimensional
weighted norm inequality for the composed operator $T_LS$, where
$L=(K^{*_2})^{*_1}$ is the iterated rearrangement of $K$ introduced by
Blozinski~\cite{B} and $S$ is the Stieltjes transform.  Explicit
Muckenhoupt-type conditions are obtained for the case $L(t,s)=(t+s)^{-1}$,
corresponding to the iterated Stieltjes operator $S^2$.
\end{abstract}

\maketitle

\section{Introduction}

The Lorentz-Gamma (LG) spaces $\Gamma_{p,\phi}$, defined by the norm
\begin{equation}\label{LGnorm}
\rho_{p,\phi}(f)=\left[\int_{\R_+}f^{**}(t)^p\phi(t)\,dt\right]^{1/p},
\quad f^{**}(t)=t^{-1}\int_0^tf^*(s)\,ds,
\end{equation}
were systematically studied by Gogatishvili and Kerman~\cite{GK}, who
characterized their r.i.\ properties and established when $\rho_{p,\phi}$
is a genuine norm.  Here $f^*$ denotes the nonincreasing rearrangement of
$f$ and the weight $\phi\in M_+(\R_+)$ satisfies
\begin{equation}\label{phi-cond}
\int_{\R_+}\frac{\phi(t)}{1+t^p}\,dt<\infty
\quad\text{and}\quad
\int_{\R_+}\phi(t)\,dt=\infty.
\end{equation}
The class $\Gamma_{p,\phi}$ contains the classical Lorentz spaces
$L_{p,q}$ (take $\phi(t)=t^{q/p-1}$, $1\le q<\infty$) and the Lebesgue
spaces $L^p$ (take $\phi\equiv 1$) as special cases; see
\cite[Ch.\,4]{BS}.  The K\"othe dual of $\rho_{p,\phi}$ is equivalent to
a norm of the same LG type, with an explicit dual weight computed in
\cite{GolK,GP}.

Given a nonneg\-ative kernel $K\in M_+(\R^{2n})$, we consider the
boundedness of the integral operator
\[
(T_Kf)(x)=\int_{\R^n}K(x,y)f(y)\,dy
\]
from $\Gamma_{p,\phi_2}(\R^n)$ to $\Gamma_{q,\phi_1}(\R^n)$,
$1<p\le q<\infty$; that is, the existence of $C>0$ such that
\begin{align}
\rho_{q,\phi_1}(T_Kf)\le C\,\rho_{p,\phi_2}(f), \quad
0\le f\in \Gamma_{p,\phi_2}(\R^n).
\end{align}
Norm inequalities of this type have a rich history. For the classical
Lebesgue case, $\phi_i\equiv 1$, $p=q$, characterizations go back to
Schur's test and the Muckenhoupt $A_p$ theory; see \cite{BS}.  Weighted
Hardy-type operators, a special subclass of $T_K$, were characterized in
Lorentz and Orlicz settings by Bloom and Kerman~\cite{BK}, whose
Muckenhoupt-type conditions we invoke in Section~\ref{sec:example}.
The present work extends to the LG setting the approach introduced in
\cite{KS1} for Orlicz-Lorentz norms; a detailed historical account of
the boundedness problem for $T_K$ can be found in Section~4 of that paper.

The strategy is to replace $T_K$ by a one-dimensional operator with a
more tractable kernel.  The \emph{iterated rearrangement}
$L=(K^{*_2})^{*_1}$ of $K$, introduced by Blozinski~\cite{B} in the
context of mixed-norm spaces, is defined as follows: for each $x\in\R^n$
rearrange $k_x(y)=K(x,y)$ in $y$ to get $K^{*_2}(x,s)$, then rearrange
in $x$ to arrive at $L(t,s)=(K^{*_2})^{*_1}(t,s)$, $s,t\in\R_+$. By
construction, $L(t,s)$ is nonincreasing in each of $s$ and $t$.

Arguing as in Theorem~2.5 of \cite{KS1} one obtains
\[
\int_0^t(T_Kf)^*(s)\,ds\le\int_0^t(T_Lf^*)(s)\,ds,\quad t\in\R_+,
\]
and the Hardy-Littlewood-Polya (HLP) principle (see Section~\ref{sec:background})
then gives $\rho_{p,\phi}(T_Kf)\preceq\rho_{p,\phi}(T_Lf^*)$.  Thus
(1.1) follows from the one-dimensional inequality
\begin{align}
\left[\int_{\R_+}(T_Lf^*)^{**}(t)^q\phi_1^{(q)}(t)\,dt\right]^{1/q}
\preceq\left[\int_{\R_+}f^{**}(s)^p\phi_2(s)\,ds\right]^{1/p}<\infty,
\end{align}
where $\phi_1^{(q)}(t)=qt^{q-1}\int_t^\infty s^{-q}\phi_1(s)\,ds$.
The main result, Theorem~\ref{TH1N}, gives a further reduction of (1.2)
to a weighted norm inequality for the composed operator $T_LS$, where
$S$ denotes the Stieltjes transform
$(Sh)(t)=\int_{\R_+}h(s)/(t+s)\,ds$.  Its proof is in
Section~\ref{sec:proof}.

\section{Background}\label{sec:background}

A \emph{rearrangement-invariant} (r.i.) norm $\rho$ on $M_+(E)$,
$E\subset\R^n$, satisfies the axioms of \cite[Ch.\,1]{BS}, including
rearrangement-invariance: $\rho(f)=\rho(g)$ whenever $f^*=g^*$.  By
Luxemburg's theorem \cite[Ch.\,3, Thm.\,4.10]{BS} there is an r.i.\ norm
$\bar\rho$ on $M_+(\R_+)$ with $\rho(f)=\bar\rho(f^*)$ for all
$f\in M_+(E)$.  The K\"othe dual $\rho'(g)=\sup_{\rho(f)\le 1}\int_E fg$
satisfies $\rho''=\rho$ and H\"older's inequality
$\int_E fg\le\rho(f)\rho'(g)$.

\medskip
\noindent\textbf{Hardy-Littlewood-Polya (HLP) Principle}
\cite[Ch.\,2]{BS}.
\textit{For any r.i.\ norm $\rho$ on $M_+(E)$,}
\[
\int_0^t f^*\le\int_0^t g^*\;\text{ for all }t\in\R_+
\;\implies\;\rho(f)\le\rho(g).
\]

\medskip
The LG norm $\rho_{p,\phi}$ defined in \eqref{LGnorm}--\eqref{phi-cond}
is an r.i.\ norm; see \cite{GK}.  Its K\"othe dual satisfies
$\rho'_{p,\phi}\approx\rho_{p',\psi}$, where $p'=p/(p-1)$ and, for
$t\in\R_+$,
\begin{equation}\label{dualweight}
\psi(t)=\frac{t^{p'+p-1}
\bigl(\int_0^t\phi\bigr)\bigl(\int_t^\infty s^{-p}\phi(s)\,ds\bigr)}
{\bigl(\int_0^t\phi+t^p\int_t^\infty s^{-p}\phi(s)\,ds\bigr)^{p'+1}};
\end{equation}
see \cite{GolK,GP}.  In particular, $\phi\equiv 1$ gives
$\rho_{p,1}=\|\cdot\|_{L^p}$ and the dual weight $\psi\equiv 1$, while
$\phi(t)=t^{q/p-1}$ gives the Lorentz norm $\rho_{p,q}$ with dual weight
of the same power type.

The \emph{associated weight} $\phi^{(q)}$ defined by
\begin{equation}\label{phiq}
\phi^{(q)}(t)=qt^{q-1}\int_t^\infty s^{-q}\phi(s)\,ds, \quad t\in\R_+,
\end{equation}
arises naturally when passing from $f^*$ to $f^{**}$: it is the minimal
weight $v$ for which $\int_{\R_+} f^{**q}u\le\int_{\R_+} f^{*q}v$ holds
for all $f\in M_+(\R_+)$, as follows from differentiating the
characterization of Neugebauer~\cite{N} (see Proposition~\ref{prop:N}
below).

\section{Main Result}\label{sec:main}

\begin{theorem}\label{TH1N}
Fix the indices $p$ and $q$, $1<p\leq q< \infty$. Let $\phi_1, \phi_2
\in M_+(\R_+)$ be locally-integrable on $\R_+$. Set
\[
\phi_1^{(q)}(t)=qt^{q-1}\int_t^{\infty}s^{-q}\phi_1(s)\, ds,
\quad t\in \R_+.
\]
Suppose
\[
\int_{\R_+}\frac{\phi_1^{(q)}(t)}{1+t^q}\, dt, \quad
\int_{\R_+}\frac{\phi_2(t)}{1+t^p}\, dt < \infty
\]
and
\[
\int_{\R_+}\phi_1^{(q)}(t)\, dt=\int_{\R_+}\phi_2(t)\, dt=\infty.
\]
Denote the dual weights of $\phi_1^{(q)}$ and $\phi_2$ by $\psi_1^{(q)}$
and $\psi_2$, respectively, so that, for example,
\[
\psi_2(t)=\frac{t^{p+p'-1}\left(\int_0^t \phi_2\right)
\left(\int_t^{\infty}s^{-p}\phi_2(s)\, ds\right)}
{\left(\int_0^t \phi_2+t^p\int_t^{\infty}s^{-p}\phi_2(s)\,
ds\right)^{p'+1}}, \quad p'=p-1.
\]
Consider $K \in M_+(\R^{2n})$ and define
\[
L(t,s)=(K_x^*)^*_y(t,s), \quad s,t\in \R_+.
\]
Then (1.1) holds whenever (1.2) does. Moreover, the latter is the case
provided one has
\begin{align}
\left[\int_{\R_+}(T_LS)h(t)^q \psi_1^{(q)}(t)\,
dt\right]^{1/q}\leq
\left[\int_{\R_+}h(s)^p\psi_2(s)^{1-p}\, ds\right]^{1/p}<\infty,
\end{align}
$h\in M_+(\R_+)$. Here,
\begin{align*}
(T_LS)h(t)&=\int_{\R_+}L(t,s)\int_{\R_+}\frac{h(y)}{s+y}\, dyds\\
&=\int_{\R_+}h(y)\int_{\R_+}\frac{L(t,s)}{y+s}\, dsdy.
\end{align*}
\end{theorem}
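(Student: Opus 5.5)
The proof has two parts: (1.2) $\Rightarrow$ (1.1), then (1.3) $\Rightarrow$ (1.2). Both come from the rearrangement inequalities already recorded in the Introduction, combined with duality for LG norms.

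\emph{First part.} Following Theorem~2.5 of \cite{KS1}, we start from
\[
\int_0^t (T_Kf)^*\le\int_0^t (T_Lf^*), \qquad t\in\R_+ .
\]
Since $L$ is nonincreasing in $t$ and $f^*\ge 0$, the function $T_Lf^*$ is nonincreasing, so $(T_Lf^*)^*=T_Lf^*$. The HLP principle applied to $\rho_{q,\phi_1}$ then gives $\rho_{q,\phi_1}(T_Kf)\le\rho_{q,\phi_1}(T_Lf^*)$. Next we pass from $(T_Lf^*)^{**}$ in the $\phi_1$ norm to $(T_Lf^*)^{**}$ in the $\phi_1^{(q)}$ norm. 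We expand $g^{**}(t)^q$ and integrate by parts against $\int_t^\infty s^{-q}\phi_1$; this is exactly the Neugebauer-type identity behind Proposition~\ref{prop:N}. It yields
\[
\int g^{**q}\phi_1\preceq\int g^{*q}\phi_1^{(q)}\le\int g^{**q}\phi_1^{(q)},
\]
and hence $\rho_{q,\phi_1}(T_Kf)\preceq$ the left side of (1.2). The right side of (1.2) is $\rho_{p,\phi_2}(f)$, so (1.2) implies (1.1).

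\emph{Second part.} We dualize the left side of (1.2). The hypotheses on $\phi_1^{(q)}$ make $\rho_{q,\phi_1^{(q)}}$ an LG norm with Köthe dual $\rho_{q',\psi_1^{(q)}}$. We then use the standard representation of $\int_0^t$ and $f^{**}$ through the Stieltjes transform, namely the pairing
\[
\int_{\R_+} g^{**}\,k=\int_{\R_+} g^*\,Pk^{\ast},
\]
together with $P\preceq S\preceq P+Q$ on nonincreasing functions, where $P$ and $Q$ are the Hardy operator and its dual. With these we rewrite $\rho_{p,\phi_2}(f)$ dually: $\rho_{p,\phi_2}(f)\approx\rho_{p',\psi_2}'$-type suprema. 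The key identity is that a nonincreasing function can be written as $f^*=Sh$ with $h\ge0$ up to equivalence, and $\rho_{p,\phi_2}(Sh)\approx\bigl(\int h^p\psi_2^{1-p}\bigr)^{1/p}$. This is obtained by duality: $\int (Sh)\,g=\int h\,Sg$ and $Sg\approx g^{**}+Qg^*$ for the dual norm $\rho_{p',\psi_2}$, followed by Hölder with weight $\psi_2$. Substituting $f^*\approx Sh$ and $(T_LSh)^{**}\approx T_LSh$, which follows since $T_LSh$ is decreasing and Hardy's inequality is available for the weight $\psi_1^{(q)}$, turns (1.2) into (1.3).

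\emph{Main obstacle.} The hardest step is showing that every $f^*$ with finite $\rho_{p,\phi_2}$ norm is dominated by some $Sh$ with $\int h^p\psi_2^{1-p}\preceq\rho_{p,\phi_2}(f)^p$. Since $T_L$ is positive, such domination suffices. I would try to prove it by taking $h$ as the extremal function in the duality $\rho_{p,\phi_2}(f)=\sup\int f^*g$. The sup can be restricted to $g=g^*$, and Sawyer's formula expresses the dual norm through $\int g^{**}(\cdot)$. A Lorentz-space reverse Hölder argument should then produce $h$ from $\psi_2$ and $f^{**}$, for instance $h(s)=\psi_2(s)f^{**}(s)^{p-1}$ suitably normalized. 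The remaining task is to check $f^*\preceq Sh$ using the monotonicity of $f^{**}$ and the explicit form of $\psi_2$.
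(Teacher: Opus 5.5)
Your first part matches the paper's first step and is fine: Theorem~2.5 of \cite{KS1}, then HLP using that $T_Lf^*$ is nonincreasing, then Proposition~\ref{prop:N} with $u=\phi_1$, $v=\phi_1^{(q)}$, for which \eqref{3.2N} holds with equality. The second part has genuine gaps.

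(i) Your ``key identity'' is false. Since $tSh(t)$ is nondecreasing, $\chi_{[0,1]}$ is not equivalent to any $Sh$. Also $\|h\|_{L^p(\psi_2^{1-p})}\preceq\rho_{p,\phi_2}(Sh)$ fails: with $\phi_2\equiv1$, the function $h=\epsilon^{-1/p}\chi_{[1,1+\epsilon]}$ has $\|h\|_p=1$ but $\|Sh\|_p\approx\epsilon^{1/p'}$. What you actually need is the one-sided lemma $f^*\preceq Sh$ with $\|h\|_{L^p(\psi_2^{1-p})}\preceq\rho_{p,\phi_2}(f)$. You leave it open, and your candidate does not give it. $\psi_2(f^{**})^{p-1}$ is homogeneous of degree $p-1$, and normalizing by $\rho_{p,\phi_2}(f)^{p-2}$ turns the required bound into $\int(f^{**})^{p(p-1)}\psi_2\preceq\rho_{p,\phi_2}(f)^{p(p-1)}$, which dilation refutes for $\phi_2\equiv1$, $p\ne2$.

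(ii) The step $(T_LSh)^{**}\approx T_LSh$ does not follow from monotonicity. For nonincreasing $F$, Proposition~\ref{prop:N} shows that $\int(F^{**})^qw\preceq\int F^qw$ requires $t^q\int_t^\infty s^{-q}w\preceq\int_0^tw$. The weight $w=\phi_1^{(q)}$ can fail this under the hypotheses: for $\phi_1=\chi_{[N,2N]}+\chi_{[M,\infty)}$, $M>2N$, the ratio of the two sides is at least $q\log(N/t)$ for $t<N$.

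(iii) No Hardy inequality trades $\phi_1^{(q)}$ for $\psi_1^{(q)}$. For $\phi_1=t^\alpha$ one has $\phi_1^{(q)}\approx t^\alpha$ and $\psi_1^{(q)}\approx t^{-\alpha/(q-1)}$, and dilation excludes any such inequality when $\alpha\ne0$. The left-hand weight in the theorem's displayed inequality should be $\phi_1^{(q)}$, as in \eqref{3.4N} and the Corollary.

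The paper avoids all of this. It never reinstates an outer $**$. It uses $\int((T_Lf^*)^{**})^q\phi_1\le\int(T_Lf^*)^q\phi_1^{(q)}\le\int(T_Lf^{**})^q\phi_1^{(q)}$, which lands on \eqref{3.3N} and bypasses (1.2). It then cites Proposition~\ref{prop:GolK}, which is a duality statement and needs no $h$ to be built. Concretely, take $k\ge0$, let $T_L^*$ be the adjoint of $T_L$, and let $QG(s)=\int_s^\infty G(t)\,dt/t$, which is nonincreasing. Then $\int(T_Lf^{**})k=\int f^{**}\,T_L^*k=\int f^*\,QT_L^*k$. So the supremum over $\rho_{p,\phi_2}(f)\le1$ equals $\rho'_{p,\phi_2}(QT_L^*k)\approx\|PQT_L^*k\|_{L^{p'}(\psi_2)}\approx\|ST_L^*k\|_{L^{p'}(\psi_2)}$, since $PQ=P+Q\approx S$ with $P$ the Hardy average. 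Hence \eqref{3.3N} is equivalent to $\|ST_L^*k\|_{L^{p'}(\psi_2)}\preceq\|k\|_{L^{q'}((\phi_1^{(q)})^{1-q'})}$. Because $S$ is self-adjoint and $(L^p(\psi_2^{1-p}))'=L^{p'}(\psi_2)$, this is exactly \eqref{3.4N}.

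Your ingredients are the right ones: $\int(Sh)g=\int h\,Sg$, $Sg\approx g^{**}+Qg^*$, and the dual weight $\psi_2$. Apply them to the operator inequality as a whole, not to a pointwise representation of $f^*$.
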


\section{Proof of Theorem~\ref{TH1N}}\label{sec:proof}

The proof requires the following two results: the first is from \cite{N},
the second is from \cite{GolK}.

\begin{proposition}\label{prop:N}
Fix $q$, $1<q<\infty$, and suppose $u,v \in M_+(\R_+)$ are
locally-integrable on $\R_+$. Then
\begin{align}\label{3.1N}
\int_{\R_+}f^{**}(t)^qu(t)\, dt\leq
\int_{\R_+}f^*(t)^qv(t)\, dt< \infty,
\quad f\in M_+(\R_+),
\end{align}
if and only if
\begin{align}\label{3.2N}
\int_0^tu(s)\, ds+t^q\int_t^{\infty}s^{-q}u(s)\, ds\leq
\int_0^t v(s)\, ds, \quad t\in \R_+.
\end{align}
\end{proposition}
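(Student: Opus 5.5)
The plan is to prove the two implications separately. Necessity will come from testing \eqref{3.1N} on characteristic functions of intervals. Sufficiency will be reduced, using the monotonicity of $f^*$, to a one-weight comparison on the cone of nonincreasing functions.

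\emph{Necessity.} Fix $t\in\R_+$ and take $f=\chi_{[0,t]}$. Then $f^*=\chi_{[0,t]}$, and $f^{**}(s)=1$ for $s\le t$ while $f^{**}(s)=t/s$ for $s>t$. Substituting into \eqref{3.1N}, the left side is exactly $\int_0^tu+t^q\int_t^\infty s^{-q}u(s)\,ds$ and the right side is $\int_0^tv$. This is \eqref{3.2N}, with the same constant.

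\emph{Sufficiency.} Write $W(t)$ for the left side of \eqref{3.2N}. A direct differentiation gives $W'(t)=u^{(q)}(t)=qt^{q-1}\int_t^\infty s^{-q}u(s)\,ds$, since the two copies of $u(t)$ cancel. The argument then has two steps.

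\begin{enumerate}
\item Since $f^{*q}$ is nonincreasing and $W(t)\le\int_0^tv$ for every $t$, Hardy's lemma \cite[Ch.\,2]{BS} gives
\[
\int_{\R_+}f^{*q}\,dW\le\int_{\R_+}f^{*q}v .
\]
\item It then remains to bound $\int_{\R_+}f^{**q}u$ by a multiple of $\int_{\R_+}f^{*q}u^{(q)}$. I would interchange the order of integration to write
\[
\int_{\R_+}f^{*q}u^{(q)}=\int_{\R_+}u(s)\,s^{-q}\int_0^s qt^{q-1}f^*(t)^q\,dt\,ds,
\]
and compare $(\int_0^sf^*)^q$ with $\int_0^sqt^{q-1}f^{*q}$ for each fixed $s$.
\end{enumerate}

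The main obstacle is step 2 with constant one, and I expect it to fail. With $f^*(t)=t^{-a}$, $a<1/q$, the comparison in step 2 carries the factor $(1-a)^{-q}$ on one side and $(1-a)^{-1}$ on the other. The first is strictly larger, and the ratio tends to $q'^{\,q-1}$ as $a\to1/q$.

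More seriously, this defeats \eqref{3.1N} itself. Take $u=\chi_{[0,1]}$ and $v=W'$, so that \eqref{3.2N} holds with equality. For $f^*(t)=t^{-a}\chi_{[0,1]}(t)$ and $aq$ close to $1$, the divergent parts of the two sides of \eqref{3.1N} compare as $(1-a)^{-q}$ against $q/(q-1)$. Hence the implication \eqref{3.2N}$\Rightarrow$\eqref{3.1N} cannot hold with constant one in general.

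What can be proved is \eqref{3.1N} with a constant $C=C(q)$, which is how I would use the result. The route I would follow first is the following.

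\begin{itemize}
\item Write $f^*=\int\chi_{[0,r)}\,d\mu(r)$.
\item Apply Minkowski's inequality in $L^q(u)$ to the resulting representation $f^{**}=\int(\chi_{[0,r)})^{**}\,d\mu(r)$.
\item Each term contributes at most $W(r)^{1/q}$, by the necessity computation above.
\item Close the estimate with the Ari\~no--Muckenhoupt/Sawyer characterization of Hardy's inequality on nonincreasing functions, $B_{p}$-type with index $q$, which \eqref{3.2N} implies.
\end{itemize}

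In the proof of Theorem~\ref{TH1N} only the equivalence up to constants is needed, since (1.2) is stated with $\preceq$. So this weakened form suffices there.
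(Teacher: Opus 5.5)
Your necessity argument is correct. So is your conclusion that sufficiency fails with constant one, and it fails in the strongest sense. For $q\ge1$ and nonincreasing $f$ one has $\frac{d}{ds}\bigl(\int_0^sf\bigr)^q=q\bigl(\int_0^sf\bigr)^{q-1}f(s)\ge qs^{q-1}f(s)^q$. After Fubini this gives $\int_{\R_+}f^{**q}u\ge\int_{\R_+}f^{*q}u^{(q)}$ for every $u$, so, contrary to the Remark, $u^{(q)}$ is not an admissible $v$ at all.

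\textbf{The gap is your fallback claim.} \eqref{3.2N} does not imply \eqref{3.1N} with any constant $C(q)$ when $q>1$, and your own step-2 computation shows why. The ratio $(1-a)^{1-q}$ is unbounded as $a\to1^-$; you only restricted to $a<1/q$ because your $u$ charged the origin. Take instead $u=\chi_{[1,2]}$ and $v=u^{(q)}$. Then \eqref{3.2N} holds with equality, and $v(t)=qc_qt^{q-1}$ on $(0,1)$, where $c_q=\int_1^2s^{-q}\,ds$. For $f^*(t)=t^{-a}\chi_{(0,1)}(t)$, $0<a<1$, one has $f^{**}(t)=t^{-1}(1-a)^{-1}$ on $[1,2]$. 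So the left side of \eqref{3.1N} equals $c_q(1-a)^{-q}$, while the right side equals $c_q(1-a)^{-1}$. Equivalently, for this pair \eqref{3.1N} would force $L^{1,q}(0,1)\subset L^1(0,1)$.

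Hence your last bullet cannot work. For $q>1$, \eqref{3.2N} is only the first of Sawyer's two conditions. The second is $\sup_{r>0}\bigl(\int_r^\infty s^{-q}u(s)\,ds\bigr)^{1/q}\bigl(\int_0^r s^{q'}v(s)V(s)^{-q'}\,ds\bigr)^{1/q'}<\infty$ with $V(s)=\int_0^sv$. It is not implied by \eqref{3.2N}: in the example its inner integrand equals $qc_q^{1-q'}s^{-1}$ on $(0,1)$. The Ari\~no--Muckenhoupt $B_q$ condition is a one-weight condition and says nothing about the pair $(u,v)$. Your Minkowski step also loses too much. It leaves you needing $\int V(r)^{1/q}\,d\mu(r)=\int_{\R_+}f^*\,d(V^{1/q})\lesssim\bigl(\int_{\R_+}f^{*q}v\bigr)^{1/q}$, and for $V(t)=t^q$ this is again the false embedding $L^{1,q}\subset L^1$.

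What your steps 1 and 2 do prove is the proposition for $0<q\le1$. There the derivative inequality above reverses, so $\int f^{**q}u\le\int f^{*q}u^{(q)}=\int f^{*q}\,dW\le\int f^{*q}v$, with constant one. For $1<q<\infty$ a correct statement needs Sawyer's second condition as an extra hypothesis. So the weakened form you propose cannot simply be fed into the proof of Theorem~\ref{TH1N}.
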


\begin{remark}
The smallest $v$ such that \eqref{3.1N} holds for a given $u$ satisfies,
by \eqref{3.2N},
\[
\int_0^tv(s)\, ds=\int_0^tu(s)\, ds+t^q\int_t^{\infty}s^{-q}u(s)\,
ds, \quad t\in \R_+.
\]
Differentiation yields
$v(t)=qt^{q-1}\int_t^{\infty}s^{-q}u(s)\, ds=u^{(q)}(t)$.
\end{remark}

\begin{proposition}\label{prop:GolK}
Let $p,q, \phi_1^{(q)}, \phi_2$ and $\psi_2$ be as in
Theorem~\ref{TH1N}. Then
\begin{align}\label{3.3N}
\left[\int_{\R_+}(T_Lf^{**})(t)^q\phi_1^{(q)}(t)\,
dt\right]^{1/q}\lesssim
\left[\int_{\R_+}f^{**}(s)^p\phi_2(s)\, ds\right]^{1/p}<\infty
\end{align}
if and only if
\begin{align}\label{3.4N}
\left[\int_{\R_+}(T_LSg)(t)^q\phi_1^{(q)}(t)\,
dt\right]^{1/q}\lesssim
\left[\int_{\R_+}g(s)^{p}\psi_2^{1-p}(s)\,
ds\right]^{1/p}< \infty,
\end{align}
$f\in\Gamma_{p, \phi_2}(\R_+)$ and $g\in L_p(\psi_2^{1-p})$.
\end{proposition}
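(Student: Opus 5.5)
The plan is to identify the cone $\{f^{**}:f\in\Gamma_{p,\phi_2}\}$ with the range of the Stieltjes transform on the positive cone of $L_p(\psi_2^{1-p})$, with equivalent norms. Once that is done, \eqref{3.3N} and \eqref{3.4N} are the same inequality written in two parametrizations. Positivity of $L$ lets pointwise two-sided estimates pass through $T_L$: if $h_1\approx h_2$ pointwise, then $T_Lh_1\approx T_Lh_2$ pointwise.

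\emph{Step 1 (the two cones agree).} For $g\ge0$ we have
\[
(Sg)(t)=\int_{\R_+}\frac{g(s)}{t+s}\,ds\approx\int_{\R_+}\min(t^{-1},s^{-1})\,g(s)\,ds .
\]
The right side is nonincreasing in $t$, and $t$ times it is nondecreasing. So $Sg$ is equivalent to a quasiconcave-type function, and hence to $F^{**}$ for the nonincreasing function $F$ obtained by differentiating $t(Sg)(t)$.

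Conversely, if $h=f^{**}$, then $h$ is nonincreasing and $th(t)$ is nondecreasing and concave. The standard representation of such functions gives a measure $\mu\ge0$ with $h(t)\approx\int\min(t^{-1},s^{-1})\,d\mu(s)$. After a routine smoothing, which is harmless up to constants since the kernel $\min(t^{-1},s^{-1})$ is doubling, we get $h\approx Sg$ with $g\ge0$. Applying $T_L$ then gives $T_Lf^{**}\approx T_LSg$ pointwise.

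\emph{Step 2 (matching the norms).} This is the heart of the matter: show that
\[
\rho_{p,\phi_2}(Sg)\approx\|g\|_{L_p(\psi_2^{1-p})}
\]
for the representing $g$, or at least an infimum over representations.

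I would argue by duality using $\rho'_{p,\phi_2}\approx\rho_{p',\psi_2}$ from Section~\ref{sec:background}. First, $\rho_{p,\phi_2}(Sg)=\sup\int (Sg)F$ over $\rho_{p',\psi_2}(F)\le1$, and one may take $F$ nonincreasing. Fubini turns this into $\sup\int g\,(SF)$. For nonincreasing $F$,
\[
(SF)(t)\approx F^{**}(t)+\int_t^\infty F(s)\,\frac{ds}{s}.
\]
One then needs
\[
\Bigl\|F^{**}+\int_t^\infty F(s)\,\tfrac{ds}{s}\Bigr\|_{L_{p'}(\psi_2)}\approx\rho_{p',\psi_2}(F).
\]
The $F^{**}$ term is exactly $\rho_{p',\psi_2}(F)$. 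The Copson term must be controlled by it, and this is where the explicit formula \eqref{dualweight} for $\psi_2$, or the conditions \eqref{phi-cond} on $\phi_2$, enter, via Proposition~\ref{prop:N}-type Hardy/Copson criteria.

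Then H\"older in $L_p(\psi_2^{1-p})$ against $L_{p'}(\psi_2)$ gives $\rho_{p,\phi_2}(Sg)\lesssim\|g\|_{L_p(\psi_2^{1-p})}$. For the reverse direction, that every $f^{**}$ is $\approx Sg$ with $\|g\|_{L_p(\psi_2^{1-p})}\lesssim\rho_{p,\phi_2}(f)$, I would use the same duality read backwards: take the extremal $F$ and set $g=\psi_2\,(SF)^{p'-1}$, suitably normalized. This is the construction underlying \cite{GolK}.

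\emph{Step 3 (conclusion).} Assume \eqref{3.4N}. Given $f$, choose $g$ as in Steps 1--2. Then
\[
\|T_Lf^{**}\|\approx\|T_LSg\|\lesssim\|g\|_{L_p(\psi_2^{1-p})}\lesssim\rho_{p,\phi_2}(f),
\]
which is \eqref{3.3N}. Conversely, assume \eqref{3.3N}. Given $g$, pick $f$ with $f^{**}\approx Sg$. Step 2 gives
\[
\|T_LSg\|\lesssim\rho_{p,\phi_2}(f)\approx\rho_{p,\phi_2}(Sg)\lesssim\|g\|_{L_p(\psi_2^{1-p})},
\]
which is \eqref{3.4N}.

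The main obstacle is Step 2, specifically the Copson-term estimate and the reverse representation bound. I would first try to settle both with Proposition~\ref{prop:N} together with its dual Copson analogue, applied to the explicit $\psi_2$.
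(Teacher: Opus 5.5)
The paper gives no proof of this proposition; it imports it from \cite{GolK}. So the question is whether your outline closes on its own. It does not: Step~2 aims at a false bound, and the other direction is left unproved.

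\textbf{The converse direction.} You pass from $\rho_{p,\phi_2}(f)$ to $\rho_{p,\phi_2}(Sg)$, but these are different objects. If $f^{**}\approx Sg$ then $\rho_{p,\phi_2}(f)=\|f^{**}\|_{L_p(\phi_2)}\approx\|Sg\|_{L_p(\phi_2)}$. By contrast, $\rho_{p,\phi_2}(Sg)=\|(Sg)^{**}\|_{L_p(\phi_2)}$ carries one more average. The bound $\rho_{p,\phi_2}(Sg)\lesssim\|g\|_{L_p(\psi_2^{1-p})}$, which your duality argument is built to prove, is false in general. Take $\phi_2(t)=t^{p-1}(1+|\log t|)^{-2}$, which satisfies \eqref{phi-cond}, and $g=\chi_{[1,2]}$. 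Then $\|g\|_{L_p(\psi_2^{1-p})}<\infty$. But $Sg(t)\ge(t+2)^{-1}$ gives $(Sg)^{**}(t)\gtrsim t^{-1}\log t$ for large $t$, so $\rho_{p,\phi_2}(Sg)^p\gtrsim\int^\infty(\log t)^{p-2}\,dt/t=\infty$.

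Consequently the Copson estimate you single out as the crux is also false. From \eqref{dualweight} one finds $\psi_2(t)\approx t^{-1}(1+\log\tfrac1t)^{-2}$ on $(0,1)$. Then $F=\chi_{[0,1]}$ has $\rho_{p',\psi_2}(F)<\infty$, while $\int_0^1(\log\tfrac1t)^{p'}\psi_2(t)\,dt=\infty$.

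The repair is to dualize the function from your Step~1, $Qg(t)=\int_t^\infty g(s)\,ds/s$, rather than $Sg\approx(Qg)^{**}$. For $\rho_{p',\psi_2}(h)\le1$,
\[
\int Qg\,h\le\int Qg\,h^*=\int g\,h^{**}\le\|g\|_{L_p(\psi_2^{1-p})}.
\]
Hence $\rho_{p,\phi_2}(Qg)\lesssim\|g\|_{L_p(\psi_2^{1-p})}$, with no Copson term at all. Combined with $Sg\le(Qg)^{**}$ pointwise, this gives \eqref{3.3N}$\Rightarrow$\eqref{3.4N}.

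\textbf{The direction \eqref{3.4N}$\Rightarrow$\eqref{3.3N}.} Here you need, for each $f$, some $g$ with $f^{**}\lesssim Sg$ \emph{and} $\|g\|_{L_p(\psi_2^{1-p})}\lesssim\rho_{p,\phi_2}(f)$. Your ``routine smoothing'' of $-t\,df^*$, e.g.\ $g(y)=f^*(y/2)-f^*(y)$, gives only the pointwise equivalence, not the norm bound.

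Use the same $\phi_2$ and $f=\chi_{[0,R]}$. Any $g$ vanishing on $(0,R)$ with $Sg\ge f^{**}$ must satisfy $\int_R^\infty g(s)\,ds/s\ge1$. Since $\psi_2(s)\approx s^{-1}(\log s)^{p'-2}$ for large $s$, H\"older gives
\[
\|g\|^p_{L_p(\psi_2^{1-p})}\ge\Bigl(\int_R^\infty s^{-p'}\psi_2\Bigr)^{1-p}\approx R^p(\log R)^{p-2},
\]
whereas $\rho_{p,\phi_2}(f)^p\approx R^p/\log R$. So the admissible $g$ must be nonlocal. Writing $g=\psi_2(SF)^{p'-1}$ for an extremal $F$ only yields equality in H\"older's inequality. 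Nothing in your outline produces the pointwise domination $f^{**}\lesssim Sg$ for such a $g$.

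What is missing is a Hahn--Banach/minimax duality for the cone $\{Sg\}$:
\[
\inf\Bigl\{\|g\|_{L_p(\psi_2^{1-p})}:f^{**}\le Sg\Bigr\}=\sup\Bigl\{\int f^{**}k:\ k\ge0,\ \|Sk\|_{L_{p'}(\psi_2)}\le1\Bigr\}.
\]
Given this, Fubini and the K\"othe duality $\rho'_{p,\phi_2}\approx\rho_{p',\psi_2}$ give
\[
\int f^{**}k=\int f^*\,Qk\le\rho_{p,\phi_2}(f)\,\rho'_{p,\phi_2}(Qk)\lesssim\rho_{p,\phi_2}(f)\,\|(Qk)^{**}\|_{L_{p'}(\psi_2)}\le2\rho_{p,\phi_2}(f)\,\|Sk\|_{L_{p'}(\psi_2)}.
\]
The last step uses $(Qk)^{**}(t)=t^{-1}\int_0^tk+Qk(t)\le2(Sk)(t)$. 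Choosing a near-optimal $g$ then gives \eqref{3.4N}$\Rightarrow$\eqref{3.3N}.

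Your formula $\psi_2(Sk)^{p'-1}$ is the expected shape of the optimizer. Its domination property, however, comes from this minimax step, not from the formula itself.
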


\textbf{Proof of Theorem~\ref{TH1N}.}
We show that (1.1) holds whenever \eqref{3.3N} does. Indeed,
\begin{align*}
\left[\int_{\R_+}(T_Kf)^{**}(t)^q \phi_1(t)\,
dt\right]^{1/q}
&\leq \left[\int_{\R_+}(T_Lf^*)^{**}(t)^q\phi_1(t)\,
dt\right]^{1/q}\\
&\preceq
\left[\int_{\R_+}(T_Lf^{**})(t)^q\phi_1^{(q)}(t)\,
dt\right]^{1/q}\\
&\preceq
\left[\int_{\R_+}f^{**}(s)^p\phi_2(s)\,
ds\right]^{1/p}<\infty.
\end{align*}
According to Proposition~\ref{prop:GolK}, \eqref{3.3N} and \eqref{3.4N}
are equivalent. This concludes the proof. $\Box$

\section{Application: the case $L(t,s)=(t+s)^{-1}$}\label{sec:example}

\begin{remark}
Recall that
\[
(T_LSg)(t)=\int_{\R_+}g(s)\int_{\R_+}\frac{L(t,y)}{s+y}\,dyds,
\]
so the kernel of $T_LS$ is
\[
M(t,s)=\int_{\R_+}\frac{L(t,y)}{s+y}\,dy
\approx \frac{1}{s}\int_0^s L(t,y)\,dy+\int_s^{\infty}L(t,y)\frac{dy}{y}.
\]
Like $L$, $M$ is nonincreasing in each of $s$ and $t$.
\end{remark}

\begin{example}
Consider the case $L(t, s)=\dfrac{1}{t+s}$, so that $T_LS=S^2$. We have
\begin{align*}
(S^2f)(t)
&\approx \frac{1}{t}\int_0^t\left[\frac{1}{s}\int_0^s f(y)\,dy
  +\int_s^{\infty}f(y)\frac{dy}{y}\right]ds\\
&\quad+\int_t^{\infty}\left[\frac{1}{s}\int_0^s f(y)\,dy
  +\int_s^{\infty}f(y)\frac{dy}{y}\right]\frac{ds}{s}\\
&\approx \frac{1}{t}\int_0^t f(s)\log\frac{t}{s}\,ds
  +\int_t^{\infty}f(s)\log\frac{s}{t}\,\frac{ds}{s}.
\end{align*}
To obtain the condition on \eqref{3.4N} in this case we use the following
result from \cite{BK}.
\end{example}

\begin{theorem}[\cite{BK}]
Let $K(x,y)$ be nonnegative on $\R^2_+$, nondecreasing in $x$,
nonincreasing in $y$, satisfying the growth condition
\[
K(x,y)\leq D[K(x,z)+K(z,y)], \quad y<z<x.
\]
Let $t,u,v,w$ be nonnegative locally-integrable weight functions on
$\R_+$ and suppose $1<p\leq q<\infty$. Then
\begin{align}
\left(\int_{\R_+}\left(w(x)\int_0^x K(x,y)f(y)\,
dy\right)^qt(x)\,dx\right)^{1/q}
\leq
\left(\int_{\R_+}(u(y)f(y))^pv(y)\,dy\right)^{1/p},
\end{align}
$f\in M_+(\R_+)$, if and only if
\begin{align*}
&\left(\int_0^xK(x,y)u(y)^{-p'}v(y)^{1-p'}\,
dy\right)^{1/p'}\left(\int_x^{\infty}w(y)^{-q}t(y)^{1-q}\,
dy\right)^{1/q}\preceq1\\
&\textit{and}\\
&\left(\int_0^xu(y)^{-p'}v(y)^{1-p'}\,dy\right)^{1/p'}
\left(\int_x^{\infty}K(y,x)w(y)^{-q}t(y)^{1-q}\,
dy\right)^{1/q}\preceq 1.
\end{align*}
\end{theorem}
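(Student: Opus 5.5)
The plan is to reduce the statement to two weighted Hardy inequalities plus a local piece, which is the classical Bloom--Kerman scheme. To keep notation light, I write $\sigma=u^{-p'}v^{1-p'}$ for the weight on the input side. I write $W$ for the weight on the output side that the theorem expresses through $w$ and $t$, so that the two displayed conditions read
\[
\Bigl(\int_0^xK(x,y)^{p'}\sigma\Bigr)^{1/p'}\Bigl(\int_x^\infty W\Bigr)^{1/q}\preceq1,
\qquad
\Bigl(\int_0^x\sigma\Bigr)^{1/p'}\Bigl(\int_x^\infty K(y,x)^qW(y)\,dy\Bigr)^{1/q}\preceq1 .
\]
This is my reading of the kernel powers. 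I would first fix the exact exponents by the substitution $g=uf$, which turns the inequality into an $L^p(\sigma)$-type inequality.

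\emph{Necessity.} I would test with explicit functions. For the first condition, fix $x$ and take $f=K(x,\cdot)^{p'-1}u^{-p'}v^{1-p'}\chi_{(0,x)}$. For $y>x$, monotonicity of $K$ in its first variable gives $\int_0^yK(y,s)f(s)\,ds\ge\int_0^xK(x,s)^{p'}\sigma(s)\,ds$. Restricting the left side to $(x,\infty)$ and dividing out gives the first condition. For the second condition I would argue by duality. The adjoint operator $g\mapsto\int_x^\infty K(y,x)g(y)\,dy$ is bounded from $L^{q'}$ to $L^{p'}$ with the dual weights. Testing it with $g=K(\cdot,x)^{q-1}W\chi_{(x,\infty)}$ and using that $K(y,s)\ge K(y,x)$ for $s<x$ (monotonicity in the second variable) produces the second condition in the same way.

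\emph{Sufficiency.} Choose a discretizing sequence $x_k$ with $\int_0^{x_k}\sigma=2^k$. If $\int_0^\infty \sigma<\infty$, truncate the sequence at the top in the usual way. For $x\in(x_k,x_{k+1}]$, split $\int_0^xK(x,y)f(y)\,dy$ into $\int_{x_k}^x$ and $\int_0^{x_k}$. On $(0,x_k)$, apply the growth condition with $z=x_k$ to get
\[
\int_0^{x_k}K(x,y)f\le D\Bigl[K(x,x_k)\int_0^{x_k}f+\int_0^{x_k}K(x_k,y)f(y)\,dy\Bigr].
\]
This gives three pieces.
\begin{itemize}
\item The first term is dominated by a Hardy operator with output weight $K(x,x_k)^qW$. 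Since $K(x,x_k)\le K(x,s)$ for $s\le x_k$, the standard Muckenhoupt condition for the Hardy operator follows from the second condition.
\item The second term is constant on each block. By H\"older on $(0,x_k)$ it is at most $(\int_0^{x_k}K(x_k,\cdot)^{p'}\sigma)^{1/p'}\|f\|$, restricted appropriately. Combined with the first condition at $x_k$ and $\int_{x_k}^{x_{k+1}}W$, this reduces to a discrete Hardy-type sum. That sum is summed using $p\le q$ (the embedding $\ell^p\subset\ell^q$) and the geometric growth of $\int_0^{x_k}\sigma$.
\item The local term $\int_{x_k}^xK(x,y)f$ is handled by H\"older on $(x_k,x)$ together with the first condition, again summing over blocks with $p\le q$.
\end{itemize}

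I expect the main obstacle to be the second piece. Here the two conditions must be combined so that the block constants telescope: one needs $\int_0^{x_k}K(x_k,\cdot)^{p'}\sigma$ to be controlled uniformly along the $x_k$ despite the kernel's growth. The first thing I would try is to iterate the growth condition across the blocks $x_j<x_{j+1}<\dots<x_k$. This writes $\int_0^{x_k}K(x_k,y)f$ as a sum over $j$ of pieces, each controlled by the first condition at $x_j$. It then leaves a weighted discrete Hardy inequality whose constant is exactly the second condition.
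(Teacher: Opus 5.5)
The paper gives no proof of this theorem; it is quoted from Bloom--Kerman \cite{BK}, so your proposal has to stand on its own. Your reading of the conditions (powers $K^{p'}$ and $K^{q}$, output weight $W=w^qt$) is the correct form of their result. The printed display is garbled: for $K\equiv1$ it does not reduce to Muckenhoupt's condition. Your necessity argument is fine, modulo the usual truncation when $\int_0^xK(x,\cdot)^{p'}\sigma=\infty$.

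\textbf{The gap is in sufficiency, piece (II).} Write $H_Kf(x)=\int_0^xK(x,y)f(y)\,dy$ and $\omega_k=\int_{x_k}^{x_{k+1}}W$. Your block constant is exactly $H_Kf(x_k)$. So $\sum_k\omega_k H_Kf(x_k)^q$ is the original inequality for the discrete output measure $\sum_k\omega_k\delta_{x_k}$, which satisfies both conditions with the same constants. The $\sigma$-dyadic splitting has therefore reduced nothing there.
\begin{itemize}
\item H\"older on all of $(0,x_k)$ plus the first condition gives only $\omega_kH_Kf(x_k)^q\lesssim\|f\chi_{(0,x_k)}\|^q$ (right-hand norm). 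This does not sum over $k$.
\item Iterating the growth condition across $x_j<\dots<x_k$ costs a factor $D^{k-j}$, and nothing in a $\sigma$-dyadic scheme decays in $k-j$ to pay for it. The second condition at $x_j$ is used up cancelling the growth $2^{j/p'}$ of $\int_0^{x_j}f$.
\end{itemize}
Concretely, take $K(x,y)=y^{-\beta}$, which satisfies the growth condition for any $D\ge1$. Take $\sigma\equiv1$ and $\int_x^\infty W=(1-\beta p')^{q/p'}x^{-(1-\beta p')q/p'}$. Both conditions then hold with constant at most $1$. Yet for $f=\chi_{(1,2)}$ the iterated bound is $\gtrsim\sum_kD^{kq}2^{-k(1-\beta p')q/p'}=\infty$ whenever $D>1$ and $\beta p'$ is close to $1$, although the inequality itself holds.

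\textbf{Piece (III) is also misassigned.} It does not follow from H\"older and the first condition; that route only yields $\int_{x_k}^{x_{k+1}}W(x)\,dx/\int_x^\infty W$, an unbounded logarithm. Instead use $K(x,y)\le K(x,x_k)$ for $y\ge x_k$, merge (III) with (I), and invoke the second condition.

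\textbf{The missing idea: discretize with respect to the operator rather than $\sigma$.} Then the growth condition is used once, and the recursive term is absorbed instead of iterated. Since $K$ is nondecreasing in $x$, $H_Kf$ is nondecreasing. Fix $\lambda>D$ and take $x_k$ with $H_Kf(x_k)=\lambda^k$ (with the routine care at jumps). The growth condition at $z=x_{k-1}$ gives
\[
\lambda^k\le D\,K(x_k,x_{k-1})\int_0^{x_{k-1}}f+D\lambda^{k-1}+\int_{x_{k-1}}^{x_k}K(x_k,y)f(y)\,dy ,
\]
and the middle term is absorbed because $D/\lambda<1$. Since $H_Kf\le\lambda^{k+1}$ on $(x_k,x_{k+1}]$, $\int W(H_Kf)^q$ is bounded by $\sum_k\omega_k$ times the $q$th powers of the two remaining terms.
\begin{itemize}
\item The local term is handled by H\"older, $\omega_k\le\int_{x_k}^\infty W$, the first condition at $x_k$, and $\ell^p\subset\ell^q$.
\item The other term is the Hardy operator sampled at $x_{k-1}$, against the measure $\sum_k\bigl(\int_{x_k}^{x_{k+1}}K(x,x_{k-1})^qW(x)\,dx\bigr)\delta_{x_{k-1}}$; this uses $K(x_k,\cdot)\le K(x,\cdot)$ for $x\ge x_k$. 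By monotonicity in the second variable, this measure's mass on $[r,\infty)$ is at most $\int_r^\infty K(x,r)^qW(x)\,dx$. Muckenhoupt's theorem and the second condition then finish the proof.
\end{itemize}
Alternatively, your iteration can be rescued by discretizing with $\int_{x_k}^\infty W=\lambda^{-k}$, $\lambda>D^q$. The $W$-masses of the blocks then decay fast enough to beat $D^{k-j}$.
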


\begin{corollary}
Let $p,q, \phi_1^{(q)}$ and $\psi_2$ be as in Theorem~\ref{TH1N}. Then
\eqref{3.4N} holds for $T_LS=S^2$ if and only if
\begin{align*}
&\left(\int_0^x \ln \frac{x}{y}\,\psi_2(y)\,
dy\right)^{1/p'}\left(\int_x^{\infty}\phi_1^{(q)}(y)^{1-q}\,
dy\right)^{1/q}\preceq1,\\
&\left(\int_0^y \psi_2(x)\,
dx\right)^{1/p'}\left(\int_y^{\infty}\ln \frac{x}{y}\,
\phi_1^{(q)}(x)^{1-q}\,dx\right)^{1/q}\preceq 1,\\
&\left(\int_0^y \ln \frac{y}{x}\,\phi_1^{(q)}(x)^{1-q}\,
dx\right)^{1/q}\left(\int_y^{\infty}\psi_2(x)\,
dx\right)^{1/p'}\preceq 1,\\
&\left(\int_0^y \phi_1^{(q)}(x)^{1-q}\,
dx\right)^{1/q}\left(\int_y^{\infty}\ln
\frac{x}{y}\,\psi_2(x)\,dx\right)^{1/p'}\preceq 1.
\end{align*}
\end{corollary}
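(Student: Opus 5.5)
The plan is to split $S^2$ into its two pieces from the Example and treat each with the Bloom--Kerman theorem. By that Example,
\[
(S^2g)(t)\approx \frac1t\int_0^t g(s)\log\frac ts\,ds+\int_t^\infty g(s)\log\frac st\,\frac{ds}{s}=:P g(t)+Q g(t).
\]
Since all terms are nonnegative, \eqref{3.4N} for $S^2$ holds if and only if it holds separately for $P$ and for $Q$. Each piece is a Hardy-type operator with logarithmic kernel, so each should produce two Muckenhoupt conditions, giving the four displayed conditions in total.

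\emph{The operator $P$.} Write $P g(x)=w(x)\int_0^x K(x,y)g(y)\,dy$ with $w(x)=1/x$ and $K(x,y)=\log(x/y)$. This $K$ is nondecreasing in $x$ and nonincreasing in $y$. The growth condition holds with $D=1$, since for $y<z<x$ we have $\log(x/y)=\log(x/z)+\log(z/y)$.

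Next I would match the weights: $t=\phi_1^{(q)}$, $u\equiv1$, $v=\psi_2^{1-p}$. Then $u^{-p'}v^{1-p'}=\psi_2^{(1-p)(1-p')}=\psi_2$, because $(p-1)(p'-1)=1$. The weight $w^{-q}t^{1-q}$ involves the factor $x^{q}$, and the displayed corollary appears to have absorbed it; I will carry it along and check whether it cancels after the normalisation. Substituting into the two Bloom--Kerman conditions gives the first two inequalities of the corollary.

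\emph{The operator $Q$.} This is the dual (adjoint-type) operator, and I see two routes.
\begin{itemize}
\item Change variables $x\mapsto 1/x$, which turns $\int_t^\infty$ into a Hardy operator $\int_0^{1/t}$ with kernel $\log$ and transformed weights, and then apply the theorem again.
\item Pass to the adjoint inequality. For $p\le q$, the bound $L_p(\psi_2^{1-p})\to L_q(\phi_1^{(q)})$ for $Q$ is equivalent to $L_{q'}\bigl((\phi_1^{(q)})^{1-q'}\bigr)\to L_{p'}(\psi_2)$ for $Q^*h(y)=\frac1y\int_0^y h(x)\log(y/x)\,dx$. This has the same form as $P$, with indices $q'\le p'$.
\end{itemize}
In either case the theorem yields the third and fourth conditions, with the roles of $\psi_2$ and $\phi_1^{(q)}(\cdot)^{1-q}$ interchanged relative to the first pair.

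The main obstacle is the bookkeeping of the weights. Specifically, I must check that the powers of $x$ coming from $w=1/x$ and from the measure $ds/s$ in $Q$ cancel or are absorbed as the corollary's form requires, and that the exponents $1/p'$ and $1/q$ come out correctly after dualisation. If the raw power weights do not disappear, I would rewrite the conditions with those factors included, or normalise the kernel as $x^{-1}\log(x/y)$ inside $K$. The latter is still admissible: monotonicity in $x$ fails, so I would instead verify the theorem's variant for $K(x,y)/x$, or split via $\log(x/y)\le\log(x/z)+\log(z/y)$ directly.

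Necessity in each case comes from the "only if" part of the Bloom--Kerman theorem together with the positivity splitting described at the start.
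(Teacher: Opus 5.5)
Your route is the one the paper intends; it gives no argument beyond quoting the Bloom--Kerman theorem after the Example. Several of your ingredients are sound:
\begin{itemize}
\item the growth condition holds with $D=1$;
\item the identity $(p-1)(p'-1)=1$ gives the dual weight $\psi_2$;
\item in the duality step, indeed $Q^{*}=P$ ($S$ is self-adjoint), and the dual problem is $P\colon L_{q'}\bigl((\phi_1^{(q)})^{1-q'}\bigr)\to L_{p'}(\psi_2)$ with $q'\le p'$.
\end{itemize}
No variant of the theorem for $K(x,y)/x$ is needed either, since the factor $1/x$ is exactly what the weight $w$ is for.

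\textbf{First gap: the bookkeeping step.} The step you deferred as bookkeeping cannot be closed, because the four displayed conditions are not a valid characterization. The actual Bloom--Kerman criterion is built from $\int_r^\infty (w(x)K(x,r))^{q}t(x)\,dx$, $\int_r^\infty w^{q}t$, $\int_0^r u^{-p'}v^{1-p'}$ and $\int_0^r K(r,y)^{p'}u(y)^{-p'}v(y)^{1-p'}\,dy$. With $w(x)=1/x$ and $t=\phi_1^{(q)}$ it produces $\phi_1^{(q)}(x)x^{-q}$ and the powers $q$, $p'$ of the kernel, never $\phi_1^{(q)}(\cdot)^{1-q}$. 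The displayed conditions are what the version quoted in the paper (with $w^{-q}t^{1-q}$ and $K$ to the first power) returns when $w\equiv1$, together with their mirror images. In other words, they come from discarding exactly the $1/x$ you flagged.

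They are not even necessary. Take $p=q=2$ and $\phi_1=\phi_2\equiv1$. Then $\phi_1^{(2)}\equiv2$ and $\psi_2$ is a positive constant, so \eqref{3.4N} reduces to $\|S^2g\|_{L^2}\lesssim\|g\|_{L^2}$, which is true by Hilbert's inequality. Yet $\int_x^\infty\phi_1^{(q)}(y)^{1-q}\,dy=\int_y^\infty\psi_2=\infty$, and all four displayed quantities are infinite. So the statement as printed is false, and your step ``substituting gives the first two inequalities'' is where any proof of it must break.

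\textbf{Second gap: the splitting $S^2\approx P+Q$.} Your first step inherits an error from the Example: $S^2g\approx Pg+Qg$ fails. The kernel of $S^2$ is exactly $\int_{\R_+}\frac{ds}{(t+s)(s+y)}=\frac{\ln(t/y)}{t-y}$, and
\[
\frac{1+|\ln(t/y)|}{2\max(t,y)}\le\frac{\ln(t/y)}{t-y}\le\frac{1+|\ln(t/y)|}{\max(t,y)},
\]
whereas your kernels vanish on the diagonal. The Example's last $\approx$ silently drops $\frac2t\int_0^tg$ and $2\int_t^\infty g(y)\frac{dy}{y}$. For $g=\chi_{[1,1+\epsilon]}$ and $t\in[1,1+\epsilon]$ one has $S^2g(t)\approx\epsilon$ but $Pg(t)+Qg(t)\le2\epsilon^2$. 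Only $S^2\ge P+Q$ survives, which gives necessity but not sufficiency.

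\textbf{The repair.} Use the kernel $1+\ln(x/y)$, which is still monotone and satisfies the growth condition with $D=1$. Then run your plan:
\begin{itemize}
\item split by positivity;
\item apply Bloom--Kerman with $w(x)=1/x$, $t=\phi_1^{(q)}$, $u\equiv1$, $v=\psi_2^{1-p}$;
\item use duality for the second piece, where $\bigl((\phi_1^{(q)})^{1-q'}\bigr)^{1-q}=\phi_1^{(q)}$.
\end{itemize}
This proves that \eqref{3.4N} holds for $S^2$ if and only if, with $p'=p/(p-1)$,
\begin{align*}
&\sup_{r>0}\Bigl(\int_r^\infty\bigl(1+\ln\tfrac{x}{r}\bigr)^{q}\,\phi_1^{(q)}(x)\,\frac{dx}{x^{q}}\Bigr)^{1/q}\Bigl(\int_0^r\psi_2\Bigr)^{1/p'}<\infty,\\
&\sup_{r>0}\Bigl(\int_r^\infty\phi_1^{(q)}(x)\,\frac{dx}{x^{q}}\Bigr)^{1/q}\Bigl(\int_0^r\bigl(1+\ln\tfrac{r}{y}\bigr)^{p'}\psi_2(y)\,dy\Bigr)^{1/p'}<\infty,\\
&\sup_{r>0}\Bigl(\int_r^\infty\bigl(1+\ln\tfrac{x}{r}\bigr)^{p'}\psi_2(x)\,\frac{dx}{x^{p'}}\Bigr)^{1/p'}\Bigl(\int_0^r\phi_1^{(q)}\Bigr)^{1/q}<\infty,\\
&\sup_{r>0}\Bigl(\int_r^\infty\psi_2(x)\,\frac{dx}{x^{p'}}\Bigr)^{1/p'}\Bigl(\int_0^r\bigl(1+\ln\tfrac{r}{y}\bigr)^{q}\phi_1^{(q)}(y)\,dy\Bigr)^{1/q}<\infty.
\end{align*}
These conditions are consistent with the $L^2$ example and with the dilation invariance of $S$, but they are not the conditions in the statement.
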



\end{document}